\theoremstyle{definition}
\newtheorem{theorem}{Theorem}
\theoremstyle{remark}
\begin{document}

\title{Non-Iterative Solution for Coordinated Optimal Dispatch via Equivalent Projection---Part II: Method and Applications}





\author{Zhenfei~Tan,~\IEEEmembership{Member,~IEEE,}
      Zheng~Yan,~\IEEEmembership{Senior~Member,~IEEE,}
      Haiwang Zhong,~\IEEEmembership{Senior~Member,~IEEE,}
      and~Qing~Xia,~\IEEEmembership{Senior~Member,~IEEE}
      \vspace{-2em}
\thanks{Z. Tan and Z. Yan are with the Key Laboratory of Control of Power Transmission and Conversion (Shanghai Jiao Tong University), Ministry of Education, Shanghai 200240, China. H. Zhong and Q. Xia are with the State Key Laboratory of Power Systems, the Department of Electrical Engineering, Tsinghua University, Beijing 100084, China. Corresponding Author: Z. Yan (e-mail: yanz@sjtu.edu.cn).}

}



%



\maketitle

\begin{abstract}
  This two-part paper develops a non-iterative coordinated optimal dispatch framework, i.e., free of iterative information exchange, via the innovation of the equivalent projection (EP) theory. The EP eliminates internal variables from technical and economic operation constraints of the subsystem and obtains an equivalent model with reduced scale, which is the key to the non-iterative coordinated optimization. In Part II of this paper, a novel projection algorithm with the explicit error guarantee measured by the Hausdorff distance is proposed, which characterizes the EP model by the convex hull of its vertices. This algorithm is proven to yield a conservative approximation within the prespecified error tolerance and can obtain the exact EP model if the error tolerance is set to zero, which provides flexibility to balance the computation accuracy and effort. Applications of the EP-based coordinated dispatch are demonstrated based on the multi-area coordination and transmission-distribution coordination. Case studies with a wide range of system scales verify the superiority of the proposed projection algorithm in terms of computational efficiency and scalability, and validate the effectiveness of the EP-based coordinated dispatch in comparison with the joint optimization.
\end{abstract}


\begin{IEEEkeywords}
Coordinated optimization, non-iterative, multi-area dispatch, distribution network, projection.
\end{IEEEkeywords}

%
\IEEEpeerreviewmaketitle

\bstctlcite{IEEEexample:BSTcontrol} 
\section{Introduction}\label{sec:intro}
\subsection{Background and Literature Review}
\IEEEPARstart{T}{he} coordinated optimal dispatch (COD) of power systems in different regions, voltage levels, and communities is vital for improving the operational economy and security. Conventional coordinated optimization methods rely on iterative information exchange among subsystems, which results in drawbacks including the convergence issue, communication burden, scalability issue, and incompatibility with the serial coordination scheme in practice. In this regard, realizing the COD in a non-iterative fashion, i.e., without iterative information exchange among subsystems, is of great interest to both academia and industry. This two-part paper aims at this issue and proposes the equivalent projection (EP)-based solution. The basic idea is to make external equivalence of technical and economic features of the subsystem for the upper-level coordinated optimization, which requires only a single-round exchange of some boundary information. Following the theory and framework introduced in Part I, the calculation method and typical applications of the EP will be discussed in this present paper.


The EP is mathematically a geometric projection problem, which eliminates internal decision variables from the secure and economic operation constraints of the subsystem and yields a low-dimensional region regarding coordination variables to represent the system. Projection calculation is a challenging task even for linear systems due to the exponential complexity in worst-case scenarios. The most classic projection algorithm is the Fourier-Motzkin Elimination (FME). The FME eliminates variables one by one through linear combinations of inequalities with opposite signs before the coefficient of each variable to be eliminated \cite{ref:fme0}. However, numerous redundant constraints will be generated after eliminating each variable, which not only complicates the projection result, but also increases the number of constraints needed to be processed for eliminating the next variable and thus aggravates the computation burden dramatically \cite{ref:loadability1}. To over these drawbacks, existing studies propose the redundancy identification technique to accelerate the FME calculation and obtain the minimal representation of the projection result \cite{ref:ifme, ref:loadability2}. Another basic algorithm for polyhedral projection is the block elimination \cite{ref:block}, which identifies facets of the projected polytope based on the projection lemma. Though one-by-one elimination of variables is avoided, the block elimination may also generate redundant constraints. The polyhedral projection problem is theoretically proven to be equivalent to the multi-parametric programming (MPP) problem \cite{ref:mpp}, which enables the solution of the projection problem via MPP algorithms. This method also leverages the projection lemma to identify irredundant inequalities of the projection.


The application of above projection methods to the EP calculation in the power system is limited due to features of the coordinated dispatch problem. First, these methods have exponential complexity to the number of variables to be eliminated and thus, are inefficient for power system dispatch problems where the number of internal variables is large. Second, a conservative approximation of the exact EP model is usually desired in engineering applications to reduce the computation effort. However, the aforementioned methods cannot yield an inner approximation since they characterize the projection by generating inequalities and will overestimate the projection result when terminated prematurely without finding all the inequalities. In this regard, existing studies in the power system literature propose different models to approximate the projection region, e.g., the box model \cite{ref:box}, the Zonotope model \cite{ref:zono}, the ellipse model \cite{ref:ellipse}, and the robust optimization-based model \cite{ref:robust}. These methods approximate the projection region with simple shapes at the cost of accuracy. As the approximation shape is pre-fixed, these methods are only applicable to projection regions with specific geometric structures and the approximation accuracy cannot be adjusted flexibly.

Another critical issue regarding the EP calculation is the error metric for controlling the calculation accuracy of the algorithm. The error metric for the EP model measures the difference between the approximated region and the exact one. Reference \cite{ref:zono} uses the distances between parallel facets of the region to measure the approximation quality. However, this metric can only be applied to a certain kind of polytope namely the Zonotope. The volume is a more general metric for comparing two regions. Reference \cite{ref:pq1} measures the calculation accuracy of the active and reactive flexibility region of the distribution network by comparing areas of regions. Reference \cite{ref:pq1} uses the same metric to measure the flexibility region enlargement brought by power flow routers. Based on the volume metric, the Jaccard similarity can be used to measure the relative error of the approximation \cite{ref:jaccard}. However, the evaluation of volume metrics is time-consuming, especially for high-dimensional problems. Furthermore, The evaluation of the volume-based error metric also relies on the knowledge of the exact projected region. Hence, this metric can only be used to test the approximation quality when the ground truth of the projection is known, but can hardly be used to control the approximation accuracy in the projection algorithm.

\subsection{Contributions and Paper Organization}
To meet practical requirements of the EP calculation for power system applications, the Part II of this paper proposes a novel projection algorithm with an explicit accuracy guarantee. With this algorithm, the EP theory and the non-iterative COD method are instantiated for multi-area system coordination and transmission-distribution coordination. The contribution of the Part II is threefold,

1) The progressive vertex enumeration (PVE) algorithm is proposed for the EP calculation in power system applications. Compared to existing methods that solve the projection problem by eliminating internal variables, the proposed method directly identifies the projection region in the lower-dimensional coordination space and thus, has lower computational complexity. Compared to existing approximation methods, the proposed algorithm is proven to be accurate for the polyhedral projection problem. The proposed method can also be used to get an inner approximation with an explicit and adjustable error tolerance according to practical requirements.

2) The Hausdorff distance is employed to measure the approximation error of the PVE algorithm. Compared to the volume metric and the Jaccard similarity, the proposed error metric is a byproduct of the vertex identification in each step of the algorithm and thus, is computationally efficient and can be used to balance the accuracy and computation effort of the algorithm.

3) The non-iterative COD for the multi-area system and transmission-distribution system is realized based on the EP calculation, which thoroughly overcomes the disadvantages of conventional coordinated optimization methods brought by iterations. The EP-based COD is verified to yield identical solutions as the joint optimization and consumes less computation time in scenarios with numerous subsystems.

The remainder of this paper is organized as follows. Section II introduces the PVE algorithm and its properties. Sections III and IV apply the EP-based COD method to the multi-area coordinated dispatch problem and transmission-distribution coordination problem, respectively. Section V concludes this paper and discusses future research.

\section{Calculation Method for EP}\label{sec:algorithm}
\subsection{Problem Setup}
In power system applications, the linear model is mostly utilized for optimal dispatch for the sake of computational efficiency, reliability, and clear economic interpretation. A broad spectrum of literature has investigated the linearized modeling of the transmission network \cite{ref:md_dc, ref:md_liudd}, distribution network \cite{ref:distflow2, ref:dist_wangyi}, and multi-energy system \cite{ref:md_hub}. Nonlinear models, in contrast, have intractable computation and may lead to incentive issues when non-convexity exists \cite{ref:nonconv_price1}. The application of nonlinear optimization is difficult even for centralized dispatch, let alone coordinated dispatch. Part II of this paper focuses on the practice-oriented coordinated dispatch method and thus, the widely used linear dispatch model is considered.

As introduced in Part I, the EP eliminates internal variables from the operation feasible region of the subsystem and makes external equivalence of the subsystem model. The operation feasible region of the subsystem is enforced by both technical constraints and the epigraph of the objective function. Without loss of generality, represent the operation feasible region of the subsystem in the following compact form,
\begin{equation}
  \label{md:ofr}
  \Omega := \left\{(x,y) \in \mathbb{R}^{N_x} \times \mathbb{R}^{N_y} : Ax + By \leq c \right\}.
\end{equation}
In the model, $x$ and $y$ denote the coordination variable and internal variable, respectively. Constant $N_x$ and $N_y$ are dimensions of $x$ and $y$. The partition of variables is introduced in Part I of this paper. Matrix $A$, $B$, and $c$ are coefficients defining operation constraints of the subsystem. Note that variables are all bounded in power system dispatch problems due to operation limits of devices. Hence, $\Omega$ formulated in \eqref{md:ofr} is a polytope in the space of $\mathbb{R}^{N_x} \times \mathbb{R}^{N_y}$.

As per Definition 2 in Part I of this paper, the EP model of the subsystem is expressed as follows,
\begin{equation}
  \label{md:esr}
  \Phi := \left\{x \in \mathbb{R}^{N_x}: \exists y, \ \text{s.t.} \ (x,y) \in \Omega \right\}.
\end{equation}
The EP model depicts the technical and economic operation characteristics in the subspace of the coordination variable. The EP model can replace the original model of the subsystem in the coordinated optimization and can ensure the equivalent optimality in a non-iterative and privacy-protected manner, as proven in Part I.

\subsection{Representations of the EP model}
As defined in \eqref{md:esr}, EP model $\Phi$ is the geometric projection of $\Omega$ onto the space of $\mathbb{R}^{N_x}$. Since $\Omega$ is a polytope, its projection $\Phi$ is also a polytope \cite{ref:equal_set}. According to the Minkowski-Weyl Theorem \cite{ref:mw_theorem}, a polytope has the following two equivalent representations, known as the double descriptions.
\begin{itemize}
  \item Hyperplane-representation (H-rep)
\end{itemize}
\begin{equation}
  \label{md:h_rep}
  \Phi = \left\{x \in \mathbb{R}^{N_x}: \tilde{A}x \leq \tilde{c} \right\}.
\end{equation}

\begin{itemize}
  \item Vertex-representation (V-rep)
\end{itemize}
\begin{equation}
  \label{md:v_rep}
  \begin{split}
    \Phi = & \text{conv}(V)\\
    := & \left\{\sum_{i=1}^{N_v} \lambda_i \hat{x}_i: \hat{x}_i \in V, \lambda_i \ge 0, \sum_{i=1}^{N_v}\lambda_i=1 \right\}.
  \end{split}
\end{equation}
In equation \eqref{md:h_rep}, $\tilde{A}$ and $\tilde{c}$ respectively denote the coefficient and the right-hand-side of the constraints that represent hyperplanes of the polytope. In equation \eqref{md:v_rep}, function $\text{conv}(\cdot)$ denote the convex hull of a set of vectors. Set $V$ contains all the vertices of $\Phi$ and $N_v$ is the number of vertices. $\hat{x}_i$ is the $i$th vertex of $\Phi$. The H-rep and V-rep of a polytope are convertible \cite{ref:mw_theorem}.

The EP calculation is to determine either the H-rep or the V-rep of $\Phi$. Though the two representations are equivalent, they will lead to different philosophies for designing the projection algorithm. In power system optimization problems, the H-rep is mostly used to model the operation feasible region. When calculating the projection, however, we find that the V-rep is more suitable due to some features of power systems. First, the dimension of the coordination variable is lower compared with internal variables of subsystems, since networks among subsystems are relatively weak-connected compared with networks inside subsystems. Hyperplane-oriented projection methods, e.g., the FME and block elimination, calculate the projection by eliminating internal variables and thus have exponential complexity regarding the dimension of the internal variable. By identifying vertices, in contrast, the projected polytope is directly characterized in the lower-dimensional coordination space, which can lower the computational difficulty. Second, from the perspective of practical implementation, the projection algorithm is desired to terminate within a given time and yield a conservative approximation of the EP model. If the hyperplane-oriented method is terminated prematurely, some critical hyperplanes enclosing the EP model will be missed and the calculation result may contain infeasible operating points. On the contrary, if the EP is calculated by identifying vertices, the intermediate output of the algorithm will always be a subset of the exact result due to the property of the convex set. In this regard, this paper develops a vertex-oriented projection method namely the PVE to calculate the EP efficiently and practically.

\begin{algorithm}[t]
  \caption{PVE Algorithm}
  \label{alg:pve}
  \begin{algorithmic}[1]
      \STATE Initialization: $k=0, \ V = V^{(0)}$
      \REPEAT
          \STATE $k=k+1$
          \STATE Construct convex hull of existing vertices:\\ $\hat{\Phi}^{(k)} = \text{conv}(V) = \left\{x: \tilde{A}_j^{(k)} x \leq \tilde{d}_j^{(k)}, j\in[J^{(k)}]\right\}$
          \STATE Initialize set of IRs: $H^{(k)}=\emptyset$
          \FOR{$j\in[J^{(k)}]$}
              \STATE Vertex identification: solve \eqref{md:vertex} with $\alpha^\top = \tilde{A}^{(k)}_j$, obtain vertex $\hat{x}^{(k,j)}$ and IR of the vertex $\Delta h^{(k,j)}$
              \IF{$\Delta h^{(k,j)} > 0$ and $\hat{x}^{(k,j)} \notin V$}
                  \STATE Save new vertex: $V=\{V, \hat{x}^{(k,j)}\}$
                  \STATE Save IR: $H^{(k)} = \{H^{(k)}, \Delta h^{(k,j)}\}$
              \ENDIF
          \ENDFOR
          \STATE Evaluate error metric: $D^{(k)} = \max H^{(k)}$
      \UNTIL{$D^{(k)} \leq \varepsilon$}
      \STATE Output: $\hat{\Phi}=\text{conv}(V)$
  \end{algorithmic}
\end{algorithm}

\subsection{PVE Algorithm} \label{sec:pve}
The PVE algorithm calculates the projected polytope by identifying its vertices. The key of the algorithm is twofold. First, the error metric has to be carefully designed to provide a clear interpretation of the calculation results and allow the pre-specification of the error tolerance. Second, the vertices have to be identified in a proper sequence to improve computational efficiency. To this end, the Hausdorff distance is employed to measure the approximation error and a double-loop framework is developed to identify vertices of the projection along a path with the steepest descent of the approximation error. The basic idea of the algorithm is first finding vertices that are critical to the overall shape of the projection, and then expanding the convex hull of existing vertices to find new vertices outside the current approximation. The flowchart is summarized in Algorithm \ref{alg:pve}. The PVE algorithm contains two layers of iterative loops; however, this algorithm is run locally by each subsystem to calculate the EP model and its loops do not conflict with that the proposed coordinated optimization method does not require iterative information exchange among subsystems. Details of the PVE algorithm will be introduced as follows along with a numerical example illustrated in Fig. \ref{fig:pve}.

\subsubsection{Vertex identification problem}
Each vertex of the projected polytope $\Phi$ is an extreme point, which can be identified by solving the following problem,
\begin{equation}
  \label{md:vertex}
  \begin{split}
    \max_{x,y} \ & h = \alpha^\top x\\
    \text{s.t.} \ & Ax + By \leq c
  \end{split}
\end{equation}
The value vector $\alpha$ in the objective function represents the direction for identifying the vertex. 

\begin{figure}[t!]
  \centering
  \includegraphics[width=3.5in]{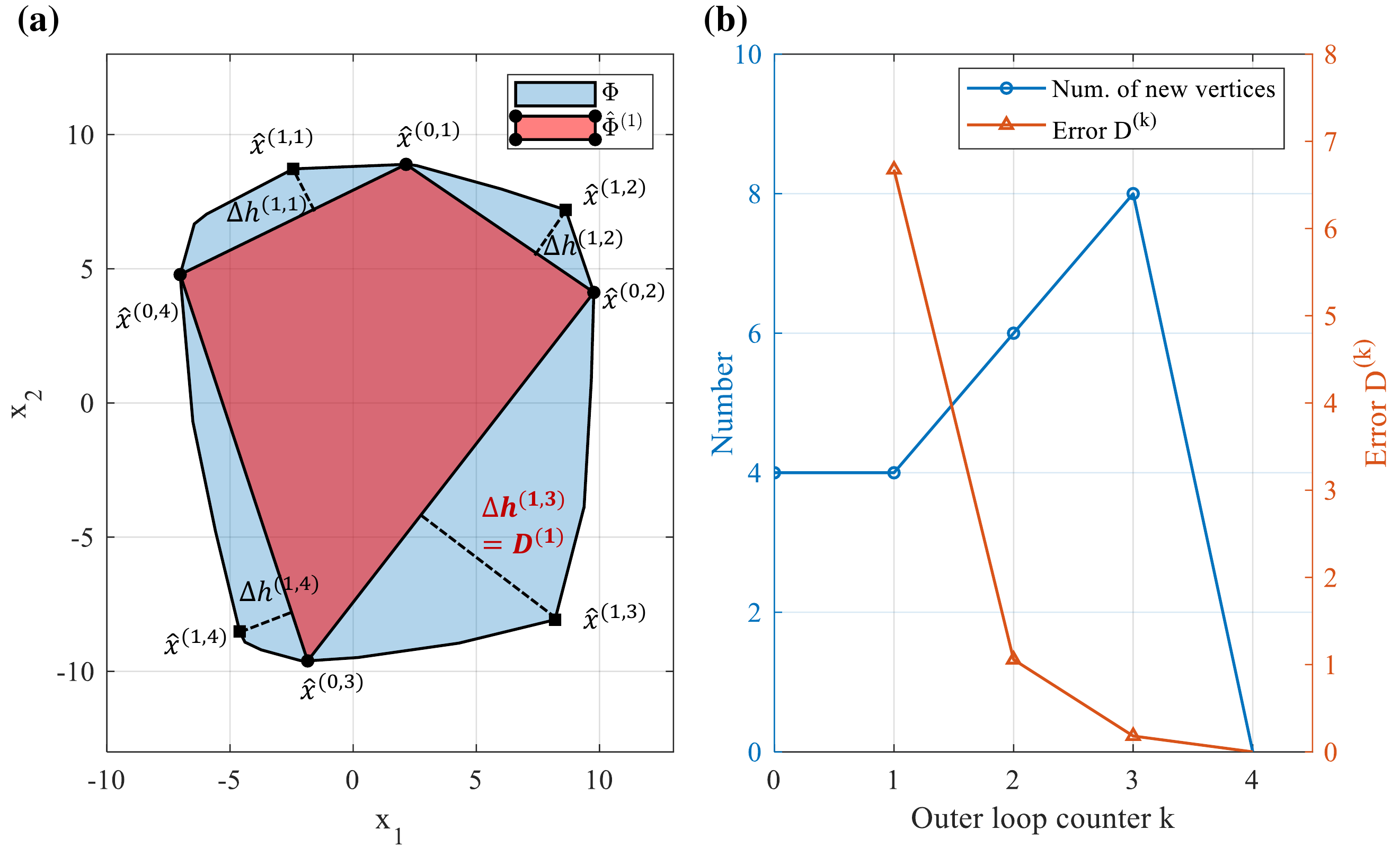}
  \caption{Illustrative example of the PVE algorithm. (a) Vertex identification in outer loop 1. (b) Number of new vertices and the error metric of each outer loop.}
  \label{fig:pve}
\end{figure}

\subsubsection{Initialization}
At least $N_x+1$ initial vertices are required to ensure the convex hull of vertices is degenerate. These vertices can be searched along each axis, i.e., solving problem \eqref{md:vertex} with $\alpha = \pm e_i$, where $e_i$ is the $i$th standard basis whose $i$th component equals 1 while others equal 0. The set of initial vertices is denoted as $V^{(0)}$. In the illustrative example in Fig. \ref{fig:pve} (a), the blue polygon is the exact projection of a randomly-generated polytope. Four initial vertices of the projection, i.e., $\hat{x}^{(0,1)},\hat{x}^{(0,2)},\hat{x}^{(0,3)}$, and $\hat{x}^{(0,4)}$, are identified for initialization.

\subsubsection{Inner loop}
The inner loop of the PVE algorithm constructs convex hull of existing vertices and identifies new vertices outside the convex hull. The convex hull of existing vertices in the $k$th outer loop is denoted as $\hat{\Phi}^{(k)}$, which is an intermediate approximation of the projection. Assume $\hat{\Phi}^{(k)}$ is enclosed by $J^{(k)}$ facets and the affine hull of the $j$th facet is $\tilde{A}_j^{(k)} x \leq \tilde{d}_j^{(k)}$. New vertices are searched along outer normal directions of facets of $\hat{\Phi}^{(k)}$. For the $j$th facet, its outer normal vector is $\tilde{A}_j^{(k)}$. Thereby, the $j$th inner loop identifies the new vertex by solving problem \eqref{md:vertex} with $\alpha^\top = \tilde{A}^{(k)}_j$. The newly identified vertex is the optimal solution of problem \eqref{md:vertex}, denoted as $\hat{x}^{(k,j)}$. The improvement ratio (IR) is calculated as \eqref{def:ir} to measure the contribution of vertex $\hat{x}^{(k,j)}$ to improve the current approximation.
\begin{equation}
  \label{def:ir}
  \Delta h^{(k,j)} = \frac{(\tilde{A}^{(k)}_j)^\top \hat{x}^{(k,j)} - \tilde{d}^{(k,j)}}{\| \tilde{A}^{(k)}_j\|}.
\end{equation}
The IR measures the distance from vertex $\hat{x}^{(k,j)}$ to the $j$th facet of $\hat{\Phi}^{(k)}$ and is non-negative. If $\Delta h^{(k,j)}>0$, indicating that $\hat{x}^{(k,j)}$ is outside $\hat{\Phi}^{(k)}$ and will contribute to improve the current approximation, then $\hat{x}^{(k,j)}$ will be appended to the vertex set $V$ and $\Delta h^{(k,j)}$ will be recorded in set $H^{(k)}$. If $\Delta h^{(k,j)} = 0$, indicating that $\hat{x}^{(k,j)}$ is on the facet of $\hat{\Phi}^{(k)}$ and will not contribute to improve the current approximation, then this vertex will be omitted.

The example in Fig. \ref{fig:pve} (a) exhibits the vertex identification in the first outer loop ($k=1$). The convex hull of existing vertices is $\hat{\Phi}^{(1)}$, as the red polygon shows. Vertices $\hat{x}^{(1,1)},\hat{x}^{(1,2)},\hat{x}^{(1,3)}$, and $\hat{x}^{(1,4)}$ are identified along the outer normal vector of the four facets of $\hat{\Phi}^{(1)}$, respectively. The Distance of each vertex to the corresponding facet of $\hat{\Phi}^{(1)}$ is the IR, as the dotted line mark in Fig. \ref{fig:pve} (a).

\subsubsection{Outer loop and termination criterion}
The outer loop evaluates the error of the current approximation and compares it with the pre-specified error tolerance to decide whether to terminate the algorithm. The Hausdorff distance between the real projection $\Phi$ and the current polytope $\hat{\Phi}^{(k)}$ is employed as the error metric, which is defined as follows \cite{ref:hd},
\begin{equation}
  D^{(k)} := \max_{x_1 \in \Phi} \min_{x_2 \in \hat{\Phi}^{(k)}} \|x_2-x_1\|_2.
\end{equation}
The interpretation of $D^{(k)}$ is the maximum distance from points in $\Phi$ to polytope $\hat{\Phi}^{(k)}$. Note that $\hat{\Phi}^{(k)}$ is the convex hull of existing vertices of $\Phi$, therefore $\hat{\Phi}^{(k)} \subset \Phi$. Hence, $D^{(k)}$ will be the maximum distance from vertices of $\Phi$ to polytope $\hat{\Phi}^{(k)}$. According to the definition in \eqref{def:ir}, the distance from each identified vertex of $\Phi$ to polytope $\hat{\Phi}^{(k)}$ is actually the corresponding IR. Thereby, the error metric can be evaluated as follows,
\begin{equation}
  \label{eq:cal_hd}
  \begin{split}
    D^{(k)} & = \max_{x_1 \in V} \min_{x_2 \in \hat{\Phi}^{(k)}} \|x_2-x_1\|_2 \\
    & =\max_{j} \Delta h^{(k,j)}\\
    & = \max H^{(k)}.
  \end{split}
\end{equation}
From the above derivation, the Hausdorff distance can be evaluated by selecting the largest value from a set of scalars, which will consume much less computation effort compared to error metrics based on the region volume. If $D^{(k)}$ is no larger than the pre-specified error tolerance $\varepsilon$, the algorithm terminates and outputs the convex hull of existing vertices as the approximation of the real projection. The error metric based on the Hausdorff distance ensures that the maximum deviation of the approximation is within $\varepsilon$. If the termination criterion is not met, the algorithm moves into the next outer loop to identify more vertices outside the current approximation. 

In Fig. \ref{fig:pve} (a), the error metric $D^{(1)}$ for outer loop $k=1$ equals the IR of vertex $\hat{x}^{(1,3)}$, which has the farthest distance to $\hat{\Phi}^{(1)}$. Fig. \ref{fig:pve} (b) exhibits the number of new vertices and the error metric in each outer loop of the PVE algorithm. The algorithm terminates at the $4$th outer loop when $D^{(4)}=0$. A total of 22 vertices of the projection are identified. As shown in the figure, the error metric keeps decreasing along the steepest path during the algorithm, indicating that vertices that have significant influences on the projection are identified with higher priority.

With the Hausdorff distance-based error metric, the process of the PVE algorithm can be interpreted as searching vertices that make the approximation error decline fast. This error metric also has another two advantages. First, the Hausdorff distance can be evaluated conveniently according to \eqref{eq:cal_hd}, which does not bring additional computational complexity. Second, the evaluation of the Hausdorff distance does not rely on the ground truth of the projection result and thus, the proposed error metric can be used to control the accuracy during the projection algorithm. In contrast, other error metrics, e.g., the volume difference metric and the Jaccard similarity, rely on the knowledge of the exact projection result and consequently, are hard to calculate and cannot be used to control the accuracy of the projection algorithm.

\subsection{Discussions}
\subsubsection{Properties of the PVE algorithm}
The convergence of the PVE algorithm and the conservatism of the approximation are guaranteed by the following theorem.
\begin{theorem}
  If error tolerance $\varepsilon>0$, the output of the PVE algorithm is a conservative approximation of the exact projection. If $\varepsilon=0$, the exact projection can be obtained via the PVE algorithm within finite calculations.
\end{theorem}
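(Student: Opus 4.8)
The plan is to decompose the statement into three claims and prove them in order: (i) for any $\varepsilon \ge 0$ the output $\hat{\Phi}=\text{conv}(V)$ is an inner (conservative) approximation of $\Phi$; (ii) when $\varepsilon=0$ the termination test forces $\hat{\Phi}=\Phi$ exactly; and (iii) the algorithm halts after finitely many outer loops. I would first dispatch the conservatism claim, which is the cleanest. Every point deposited in $V$---both the initial points of $V^{(0)}$ and each $\hat{x}^{(k,j)}$---is the $x$-part of an optimal solution of \eqref{md:vertex}, hence a feasible point of $\Omega$; by the definition \eqref{md:esr} it therefore lies in $\Phi$. Since $\Phi$ is a convex polytope, $\hat{\Phi}=\text{conv}(V)\subseteq\Phi$, which is precisely conservatism. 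When $\varepsilon>0$ the stopping test $D^{(k)}\le\varepsilon$ then certifies, via \eqref{eq:cal_hd}, that the Hausdorff deviation of the output is bounded by $\varepsilon$, completing the first sentence of the theorem.

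Next I would prove exactness under $\varepsilon=0$, conditional on termination. At termination $D^{(k)}=0$, so \eqref{eq:cal_hd} forces every improvement ratio to vanish: $\Delta h^{(k,j)}=0$ for all $j\in[J^{(k)}]$. Because $\hat{x}^{(k,j)}$ maximizes $\tilde{A}^{(k)}_j{}^{\top} x$ over $\Phi$ and, by \eqref{def:ir}, its vanishing IR means $\tilde{A}^{(k)}_j{}^{\top}\hat{x}^{(k,j)}=\tilde{d}^{(k)}_j$, I obtain $\max_{x\in\Phi}\tilde{A}^{(k)}_j{}^{\top} x=\tilde{d}^{(k)}_j$ for each facet of $\hat{\Phi}^{(k)}$. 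Thus no point of $\Phi$ violates any defining inequality of $\hat{\Phi}^{(k)}$, giving $\Phi\subseteq\hat{\Phi}^{(k)}$. Combining this with the reverse inclusion $\hat{\Phi}^{(k)}\subseteq\Phi$ from step (i) yields $\hat{\Phi}^{(k)}=\Phi$, so the exact H-rep/V-rep of the projection is recovered.

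It then remains to establish finite termination. The key structural fact I would invoke is that each $\hat{x}^{(k,j)}$ is the $x$-projection of a basic optimal (vertex) solution of the linear program \eqref{md:vertex} over the polytope $\Omega$; since $\Omega$ has finitely many vertices, all candidate points lie in a fixed finite set. I would then show that any non-terminating outer loop makes strict progress: if $D^{(k)}>0$, some facet has $\Delta h^{(k,j)}>0$, so the corresponding $\hat{x}^{(k,j)}$ lies strictly outside $\hat{\Phi}^{(k)}=\text{conv}(V)$ and is therefore genuinely new to $V$. Hence $|V|$ strictly increases on every non-terminating loop while remaining bounded by the finite number of candidate vertices, which caps the number of outer loops; each loop in turn solves only the finitely many programs indexed by $j\in[J^{(k)}]$, as $\text{conv}(V)$ has finitely many facets.

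The main obstacle I anticipate is making the finite-set argument in step (iii) airtight. It relies on the identified points being confined to the finite vertex set of $\Omega$, which in turn requires the solver of \eqref{md:vertex} to return a basic (vertex) optimizer; if a maximizing face of $\Phi$ is higher-dimensional and a relative-interior optimizer were returned, the ``strict progress within a finite candidate set'' reasoning would break down and the bounded-increasing-counter argument would no longer apply. I would close this gap by explicitly invoking the standard existence of a basic optimal solution for bounded linear programs, thereby guaranteeing $V\subseteq\{\text{projections of vertices of }\Omega\}$, and by noting that the convex-hull facet enumeration in Step~4 yields finitely many search directions per loop.
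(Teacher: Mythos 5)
Your proof is correct, and its skeleton is the same as the paper's: (i) conservatism from $\hat{\Phi}^{(k)}=\text{conv}(V)\subseteq\Phi$, (ii) exactness at $\varepsilon=0$ by combining $\Phi\subseteq\hat{\Phi}^{(k)}$ with the reverse inclusion, (iii) termination from finiteness of a vertex set. The difference is in rigor, and it is worth noting. The paper's proof is essentially three assertions: that the convex hull of identified ``vertices'' lies inside $\Phi$, that $D^{(k)}=0$ means no point of $\Phi$ is outside $\hat{\Phi}^{(k)}$, and that termination is finite ``since the number of vertices of a polytope is finite.'' You supply the justifications the paper leaves implicit: each point placed in $V$ is the $x$-part of a feasible point of $\Omega$, hence in $\Phi$ (so conservatism does not require the identified points to be vertices of $\Phi$ at all); and zero improvement ratios mean each facet inequality $\tilde{A}^{(k)\top}_j x\le\tilde{d}^{(k)}_j$ is valid for all of $\Phi$, which is exactly what yields $\Phi\subseteq\hat{\Phi}^{(k)}$. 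Most importantly, your step (iii) repairs a real looseness in the paper: counting vertices of the projection only works if every identified point is a vertex of $\Phi$, but the optimal face of \eqref{md:vertex} in a given direction can be higher-dimensional, and the $x$-projection of a vertex of $\Omega$ need not be a vertex of $\Phi$. Your argument---strict progress (a point with positive IR lies strictly outside $\text{conv}(V)$, so $|V|$ grows) within the finite candidate set of $x$-projections of basic optimal solutions of \eqref{md:vertex}---is the right fix, at the cost of the standard and explicitly stated assumption that the LP solver returns basic (vertex) optima, which simplex-type solvers do. So: same route as the paper, but your write-up closes gaps the paper's one-paragraph proof glosses over.
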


\begin{proof}
  In each round of the outer loop of the PVE algorithm, the convex hull of existing vertices is an intermediate approximation of the projection. Since the projected region is a polytope, the convex hull of its vertices is a subset of the exact projection, i.e., $\hat{\Phi}^{(k)} \subset \Phi$ for any $k$. Hence, if $\varepsilon>0$, a conservative approximation of the exact projection will be obtained with the maximum Hausdorff distance no larger than $\varepsilon$. If $\varepsilon$ is set 0, indicating that no point of $\Phi$ is outside $\hat{\Phi}^{(k)}$ when the algorithm terminates, then $\Phi \subset \hat{\Phi}^{(k)}$. Note that $\hat{\Phi}^{(k)} \subset \Phi$ and thus, $\hat{\Phi}^{(k)} = \Phi$. Since the number of vertices of a polytope is finite, the algorithm will terminate within finite steps. 
\end{proof}

\subsubsection{Accelerating strategies}
Three strategies are proposed to further accelerate the PVE algorithm. The first is to accelerate the solving of the vertex identification problem in \eqref{md:vertex}. Note that only the value vector in the objective function is varying for identifying different vertices, the structure and parameters of constraints are invariant. Thereby, the redundant constraint elimination technique and the warm simplex basis technique can be used to accelerate the solution of \eqref{md:vertex}. The redundant constraint elimination technique removes constraints that do not impact the feasible region to reduce the problem scale. Typical methods can be found in literature \cite{ref:redundant1} and \cite{ref:redundant2}. The warm simplex basis can be specified for problem \eqref{md:vertex} using the solution of the adjacent vertex that is on the same facet of which the outer normal vector is used as the searching direction. The specification of the start basis can reduce the iteration number of the simplex algorithm and thus accelerating the vertex identification. The second strategy is to accelerate the inner loop of the PVE algorithm by parallel computing. In line 7 of the algorithm, the identification of each vertex is independent, which can be calculated in parallel to save the computation time. The third is to dynamically update the convex hull when adding new vertices in each outer loop (line 4 of Algorithm \ref{alg:pve}), instead of building a new convex hull for all vertices. The updating of the convex hull can be realized by the quick hull method \cite{ref:qhull}, which is not only computationally efficient but also capable of constructing high-dimensional convex hulls.

\subsubsection{Analysis of adaptability}
First, for the problem where the projection region is a general convex set, the proposed PVE algorithm can also be applied. Perimeter points of the projection region will be identified by the algorithm, and the convex hull of these perimeter points is an inner approximation of the projection result. The approximation error measured by the Hausdorff distance is less than $\varepsilon$. Second, if the projection region is non-convex, the PVE algorithm cannot be directly applied. A potential solution is decomposing the non-convex projection region into the union of several convex sub-regions and using the PVE algorithm to identify each sub-region. Third, for the problem involving multiple time intervals, the coordination variable may have a high dimension and the direct application of the PVE algorithm will be inefficient. There are strategies to decompose the time-coupled projection problem into a series of lower-dimensional subproblems regarding each single time interval and adjacent intervals \cite{ref:lin}. After decomposition, each low-dimensional projection can be calculated efficiently by the PVE algorithm.

\section{Application to Multi-Area Coordinated Optimal Dispatch}\label{sec:use1}
\subsection{Problem Formulation}
Power systems in different areas are interconnected physically through inter-area transmission lines, but different areas may be operated by different system operators. The multi-area coordinated optimal dispatch (MACOD) is thus becoming indispensable for the economic and secure operation of large-scale power systems. The MACOD minimizes the total operation cost of the multi-area system in a decomposed manner. 

Use $P$, $\pi$, and $\theta$ to represent variables of active power, operation cost, and voltage phase angle, respectively. Use superscript $G$, $D$, $B$, $RN$, $MA$, and $TL$ to label generation, load, power exchange at boundary node, regional network, multi-area system, and tie-line, respectively. Let $r\in \mathcal{R}^{RN}$ and $s \in \mathcal{S}$ index areas and tie-lines, respectively. Let $n \in \mathcal{N}^{RN}_r$ and $l\in \mathcal{L}^{RN}_r$ index network nodes and internal branches of the $r$th area, respectively. Then the objective function of a basic MACOD is as follows,
\begin{equation}
  \label{macd:obj}
  \min \quad \pi^{MA} = \sum_{r \in \mathcal{R}^{RN}} \pi^{RN}_r
\end{equation}
where $\pi^{RN}$ is the operation cost of area $r$ and $\pi^{MA}$ is the total cost of the multi-area system.

The following constraints need to be satisfied for area $r$.
\begin{subequations}
  \label{macd:cst_area}
  \begin{align}
    & \overline{\pi}^{RN}_r \ge \pi^{RN}_r \ge \sum_{n \in \mathcal{N}^{RN}_r} \pi^{RN,G}_{r,n},\label{macd:cost_1}\\
    & \pi^{RN,G}_{r,n} \ge a^{RN,G}_{r,n,i} P^{RN,G}_{r,n} + b^{RN,G}_{r,n,i}, \forall n,i,\label{macd:cost_2}\\
    & \sum_{n \in \mathcal{N}^{RN}_r} P^{RN,G}_{r,n} = \sum_{n \in \mathcal{N}^{RN}_r} P^{RN,D}_{r,n} + \sum_{n \in \mathcal{N}^{RN,B}_r} P^{RN,B}_{r,n}, \label{macd:bal}\\
    & \begin{aligned}
        -\overline{F}^{RN}_{r,l} \leq & \sum_{n \in \mathcal{N}^{RN}_r} T_{r,l,n}P^{RN,G}_{r,n} - \sum_{n \in \mathcal{N}^{RN}_r} T_{r,l,n}P^{RN,D}_{r,n} \\
        & - \sum_{n \in \mathcal{N}^{RN,B}_r} T_{r,l,n}P^{RN,B}_{r,n} \leq \overline{F}^{RN}_{r,l}, \forall l,
      \end{aligned} \label{macd:branch}\\
    & \underline{P}^{RN,G}_{r,n} \leq P^{RN,G}_{r,n} \leq \overline{P}^{RN,G}_{r,n}, \forall n.\label{macd:gen}
  \end{align}
\end{subequations}
In the above model, equation \eqref{macd:cost_1} models the operation cost of area $r$ in the epigraph form, where $\pi^{RN,G}_{r,n}$ is the cost of generator $n$ and $\overline{\pi}^{RN}_r$ is a large enough constant to bound $\pi^{RN}_r$. Equation \eqref{macd:cost_2} is the epigraph of the piece-wise linear cost function of generator $n$, where $a^{RN,G}_{r,n,i}$ and $b^{RN,G}_{r,n,i}$ are coefficients of each bidding segment, respectively. Equation \eqref{macd:bal}, \eqref{macd:branch}, and \eqref{macd:gen} enforce the power balance, transmission limits, and generation limits of area $r$. In the equations, $T_{r,l,n}$ is the power transfer distribution factor of node $n$ to branch $l$, $\overline{F}^{RN}_{r,l}$ is the capacity of branch $l$, $\underline{P}^{RN,G}_{r,n}$ and $\overline{P}^{RN,G}_{r,n}$ are lower and upper bounds of generation output, respectively. 

The operating constraints of tie-lines are as follows,
\begin{subequations}
  \label{macd:cst_tie}
  \begin{align}
    & P^{TL}_{s} = \frac{1}{x_s}\left(\theta_{r^F_s} - \theta_{r^T_s} \right), \forall s,\label{macd:tie1}\\
    & P^{RN,B}_{r,n} = \sum_{\substack{r^F_s = r, \\ n^F_s = n}} P^{TL}_s - \sum_{\substack{r^T_s = r,\\ n^T_s = n}} P^{TL}_s, \forall n, r,\label{macd:tie2}\\
    & \underline{F}^{TL}_s \leq P^{TL}_{s} \leq \overline{F}^{TL}_s, \forall s. \label{macd:tie3}
  \end{align}
\end{subequations}
Equation \eqref{macd:tie1} is the direct-current power flow model for tie-lines, where $x_s$ is the reactance of tie-line $s$, $r^F_s$ and $r^T_s$ denote the from and end region of the tie-line, respectively. In this work, each area is aggregated as a node when evaluating power flows on tie-lines, which is widely accepted in the literature \cite{ref:node1} and is also implemented in the flow-based market integration in Europe \cite{ref:node2}. Equation \eqref{macd:tie2} maps tie-line power flows to boundary power injections of each area, where $n^F_s$ and $n^T_s$ denote the from and end node of tie-line $s$, respectively. Equation \eqref{macd:tie3} enforces transmission limits of tie-lines.

\subsection{EP-Based Solution for MACOD}\label{cp:esr_macd}
In the MACOD problem, variable $P^{RN,B}_{r,n}$ appears in constraints of both the regional system and tie-line system, which restricts the independent optimization of each area. Based on the EP theory proposed in Part I of this paper, the MACOD problem can be solved in a decomposed manner without iterative information exchange. 

From the perspective of primal decomposition, $P^{RN,B}_{r}=(P^{RN,B}_{r,1},\cdots,P^{RN,B}_{r,|\mathcal{N}^{RN,B}_r|})$ and $\pi^{RN}_r$ are seen as the coordination variable corresponding to area $r$, while $P^{RN,G}_{r}=(P^{RN,G}_{r,1},\cdots,P^{RN,G}_{r,|\mathcal{N}^{RN,G}_r|})$ is the internal variable. According to \eqref{md:ofr}, the operation feasible region of each area is as follows,
\begin{equation}
  \Omega^{RN}_r = \left\{(P^{RN,B}_r, \pi^{RN}_r, P^{RN,G}_{r}): \text{Eq.} \ \eqref{macd:cst_area}\right\}.
\end{equation}
According to \eqref{md:esr}, the EP model of the area is the projection of $\Omega^{RN}_r$ onto the subspace of coordination variable, i.e.,
\begin{equation}
  \label{md_area:esr}
  \begin{split}
    \Phi^{RN}_r = \{& (P^{RN,B}_r, \pi^{RN}_r): \exists P^{RN,G}_{r}, \\
    & \text{s.t. } (P^{RN,B}_r, \pi^{RN}_r, P^{RN,G}_{r}) \in \Omega^{RN}_r \}.
  \end{split}
\end{equation}
The EP model of the regional system contains all values of $(P^{RN,B}_r, \pi^{RN}_r)$ that can be met by at least one generation schedule subject to regional operation constraints with generation cost no larger than $\pi^{RN}_r$. Note that the objective of the MACOD is to minimize the sum of $\pi^{RN}_r$. Using the EP model as a substitute for the regional system model \eqref{macd:cst_area} in the MACOD, the cost variable $\pi^{RN}_r$ will be minimized and the decision result of $P^{RN,B}_r$ will be ensured to be feasible for the regional system, which leads to the coordinated optimality of multiple areas. Theoretical proof of the optimality is in Theorem 3 in Part I of this paper.

Then the EP-based MACOD is realized following three steps. 
\begin{enumerate}[1)]
  \item Equivalent projection. Each area calculates the EP model $\Phi^{RN}_r$ of the local system according to the definition in \eqref{md_area:esr}, which can be realized by the PVE algorithm introduced in Section \ref{sec:pve}. Whereafter, $\Phi^{RN}_r$ is submitted to the multi-area coordinator.
  \item Coordinated optimization. The coordinator solves the EP-based MACOD problem formulated as $\min \{\sum_{r} \pi^{RN}_r : \text{Eq. } \eqref{macd:cst_tie}, (P^{RN,B}_r, \pi^{RN}_r) \in \Phi^{RN}_r , \forall r \in \mathcal{R}^{RN} \}$. The optimal value $(\hat{P}^{RN,B}_r, \hat{\pi}^{RN}_r)$ is then published to each area.
  \item Regional system operation. Each area fixes $\hat{P}^{RN,B}_r$ as the boundary condition and solves the local optimal dispatch problem formulated as $\min \{\pi^{RN}_r : \text{Eq.} \eqref{macd:cst_area}, P^{RN,B}_r = \hat{P}^{RN,B}_r\}$.
\end{enumerate}
The above process is executed sequentially, which naturally overcomes drawbacks caused by repetitive iterations of conventional coordinated optimization methods.

\subsection{Case Study}
The EP of the regional transmission system is visualized, and the computational performance of the EP calculation and the EP-based MACOD is tested. Case studies in this paper are simulated on a Lenovo ThinkPad X13 laptop with an Intel Core i7-10510U 1.80-GHz CPU. Algorithms are programmed in MATLAB R2020a with GUROBI V9.0.0.
\subsubsection{EP of regional system}
\begin{figure}[t!]
  \centering
  \includegraphics[width=3.5in]{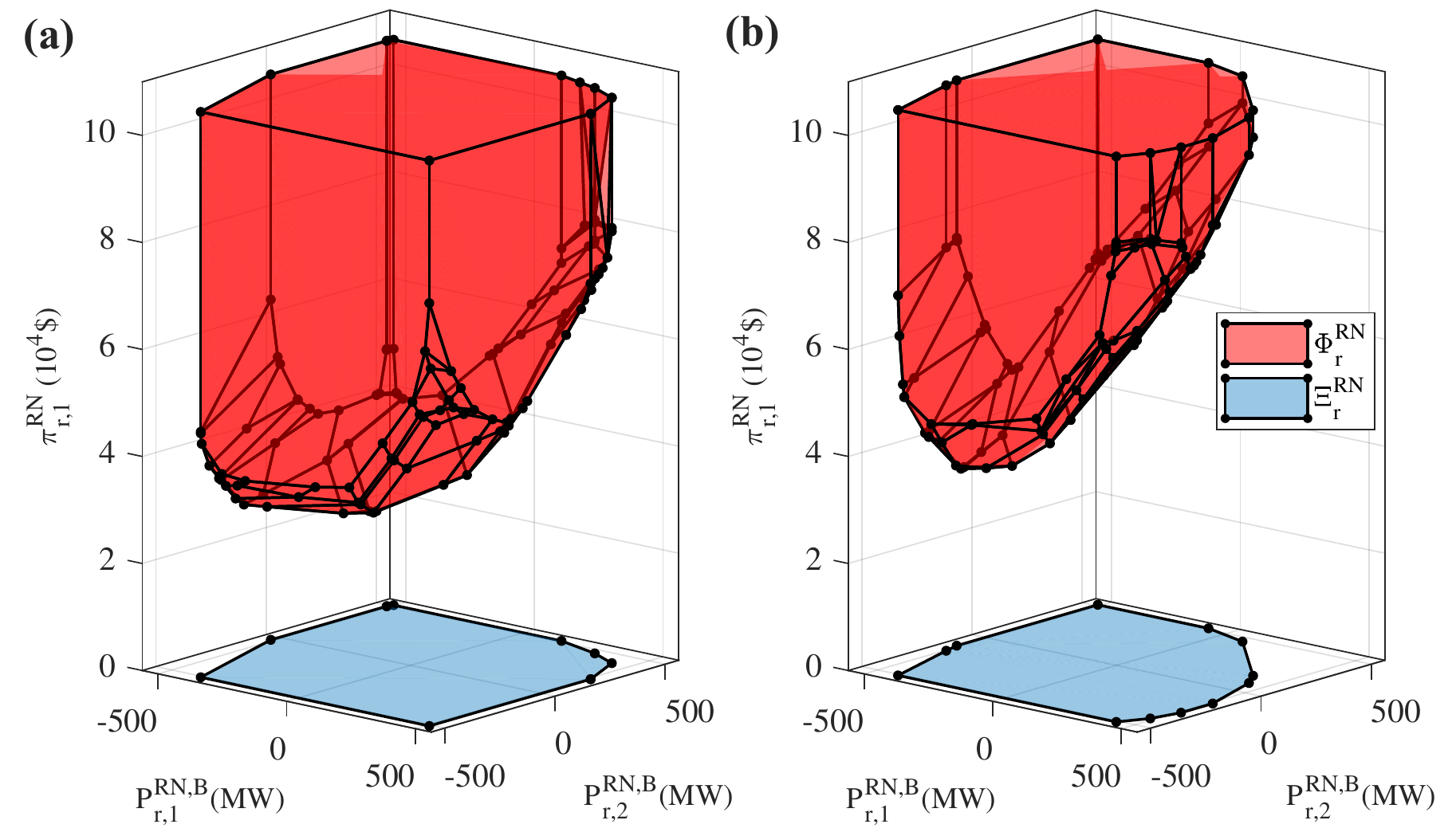}
  \caption{EP models of the IEEE-24 system at (a) valley hour and (b) peak hour.}
  \label{fig:reg_esr}
\end{figure}
We employ the IEEE-24 system for visualizing the EP model of the regional transmission system. Two tie-lines are connected to node 1 and node 3 of the system, and each of them has the capacity of $\SI{510}{MW}$ (15\% of the total generation capacity). The results are shown in Fig \ref{fig:reg_esr}, where subplots (a) and (b) exhibit EP models at the valley-load hour (77\% of the peak load) and peak-load hour, respectively. Red regions are EP models of the system, which are 3-dimensional polytopes. As can be seen, the volume of the EP model at the peak hour is smaller than that at the valley hour. This is because the increase of load occupies the export capacity and upraising the power supply cost. The projection of the EP model on the subspace of $(P^{RN,B}_{r,1}, P^{RN,B}_{r,2})$, denoted as $\Xi^{RN}_r$, is also plotted, which characterizes the admissible set of tie-lie flows that can be executed by the regional system.

\subsubsection{EP calculation}
\begin{table}[t!]
  \centering
  \begin{threeparttable}[c]
  \caption{Computational Efficiency of EP Calculation}
  \label{tab:esr_eff}
  \renewcommand\arraystretch{1.3}
  \setlength{\tabcolsep}{2.5mm}{
  \begin{tabular}{ccccc}
      \toprule
      System & \makecell{$|\mathcal{N}^B_r|$} & \makecell{Time of \\ FME (s)} & \makecell{Time of \\PVE (s)} & \makecell{Rate of \\ model reduction}\\
      \midrule
      \multirow{2}{*}{IEEE-24}& 3 & \textgreater1200  & 0.34  &  95.4\% \\
                              & 6 & \textgreater1200  & 3.54  &  92.7\% \\  
      \multirow{2}{*}{SG-200} & 3 & \textgreater1200  & 0.46  &  99.8\% \\
                              & 6 & \textgreater1200  & 2.69  &  99.6\% \\  
      \multirow{2}{*}{SG-500} & 3 & \textgreater1200  & 0.47  &  99.9\% \\
                              & 6 & \textgreater1200  & 2.99  &  99.6\% \\  
      \bottomrule
  \end{tabular}
  }
  \end{threeparttable}
\end{table}

We test the computational performance of the proposed PVE algorithm for EP calculation based on the IEEE-24 test system, the 200 node-synthetic grid (SG-200), and the 500 node-synthetic grid (SG-500) \cite{ref:synthetic}. Cases with 3 and 6 tie-lines are considered for each test system. The FME is employed as the benchmark method. As summarized in TABLE \ref{tab:esr_eff}, the FME fails to obtain the EP model within 1200s even for the small-scale IEEE-24 system. The proposed PVE algorithm, in contrast, can yield the EP model within 4s for all the 6 cases, which is acceptable for practical application. Measure the scale of the regional system model \eqref{macd:cst_area} by the production of numbers of variables and constraints. With the EP, the model scale of the regional system is reduced by 92.7\%$\sim$99.9\%, which will greatly alleviate the communication and computation burden of the coordinated dispatch of multiple areas as well as protecting the private information of regional systems. 

\subsubsection{EP-based MACOD}
\begin{table}[t]
  \centering
  \caption{Computational Efficiency of EP-based MACOD}
  \label{tab:macd_eff}
  \renewcommand\arraystretch{1.3}
  \setlength{\tabcolsep}{2mm}{
  \begin{tabular}{ccccc}
      \toprule
      \multirow{2}{*}{System} & \multirow{2}{*}{\makecell{Time of joint\\ optimization (s)}} & \multicolumn{3}{c}{Time of EP-based coordination (s)} \\
      \cmidrule{3-5}
       & &  \makecell{System \\ reduction} & \makecell{Coordinated \\ optimization} & Total \\
      \midrule
      $20\times \text{SG-200}$ & 0.42 & 0.38 & 0.11 & 0.49\\
      $40\times \text{SG-200}$ & 0.61 & 0.32 & 0.10 & 0.42\\
      $20\times \text{SG-500}$ & 0.51 & 0.49 & 0.09 & 0.58\\
      $40\times \text{SG-500}$ & 0.78 & 0.51 & 0.10 & 0.61\\
      \bottomrule
  \end{tabular}
  }
\end{table}

Four test systems composed of 20 and 40 synthetic grids of different scales are constructed according to the tie-line topology in \cite{ref:tan_macd}. The MACOD is solved by the joint optimization and the EP-based decomposed solution, respectively. The optimization results from the two methods are verified to be identical, which validates the accuracy of the proposed coordination method. According to the process introduced in Section \ref{cp:esr_macd}, the time for system reduction depends on the region that takes the longest computation time, since EP models of different regions are calculated in parallel. The total time to obtain the optimal tie-line scheduling is the sum of the system reduction time and the coordinated optimization time. Results are summarized in TABLE \ref{tab:macd_eff}. As is shown, the total computation time of the EP-based MACOD is comparable with that of the joint optimization. With the EP, the time consumed by the coordinated optimization is reduced by more than 73.8\% compared with the joint optimization. This is because the scale of the coordination problem is significantly reduced with the EP of each region. In addition to the computation efficiency, the primary advantages of the proposed coordination method are avoiding private data disclosure compared with the joint optimization and avoiding iterative information exchange compared with conventional coordinated optimization methods.


\section{Application to Transmission-Distribution Coordinated Optimal Dispatch}\label{sec:use2}
\subsection{Problem Formulation}
Distribution networks installed with DERs can provide flexibility to the transmission system by adjusting the power exchange at the substation, which calls for the transmission-distribution coordinated optimal dispatch (TDCOD). In this study, optimal active power dispatch is considered for the transmission system, while the co-optimization of active and reactive power is considered for the distribution system with voltage limits. Let $P$, $Q$, $V$, and $\pi$ denote variables of active power, reactive power, voltage magnitude, and operation cost, respectively. Similar to the notation in Section \ref{sec:use1}, use superscript $G$, $D$, $F$ to label generation, load, and power flow, respectively. Use superscript $TN$ and $DN$ to label variables of transmission and distribution systems, respectively. Let $r\in\mathcal{R}^{DN}$ index distribution networks participate in the coordinated dispatch. Let $n\in\mathcal{N}^{DN}_r$ and $l\in\mathcal{L}^{DN}_r$ index nodes and branches of distribution network $r$. Let $n\in\mathcal{N}^{TN}$ and $l\in\mathcal{L}^{TN}$ index nodes and branches of the transmission network.

The objective of the TDCOD is minimizing the total operation cost of the transmission and distribution systems,
\begin{equation}
  \label{tdcd:obj}
  \min \ \pi^{TD} = \sum_{n\in\mathcal{N}^{TN}}C_n(P^{TN,G}_n) + \sum_{r\in\mathcal{R}^{DN}} \pi^{DN}_r
\end{equation}

The power balance constraint, transmission limits, and generation limits are considered for the transmission network,
\begin{subequations}
  \label{tdcd:tn}
  \begin{align}
    & \sum_{n\in\mathcal{N}^{TN}}P^{TN,G}_n + \sum_{r\in\mathcal{R}^{DN}}P^{DN}_{r,0} = \sum_{n\in\mathcal{N}^{TN}}P^{TN,D}_n, \label{tdcd:tn1}\\
    & \begin{aligned}
      -\overline{F}^{TN}_l \leq & \sum_{n\in\mathcal{N}^{TN}} T^{TN}_{l,n}\left(P^{TN,G}_n-P^{TN,D}_n \right) \\
      & +\sum_{r\in\mathcal{R}^{DN}} T^{TN}_{l,r} P^{DN}_{r,0} \leq \overline{F}^{TN}_l, \forall l,
    \end{aligned} \label{tdcd:tn2}\\
    & \underline{P}^{TN,G}_n \leq P^{TN,G}_n \leq \overline{P}^{TN,G}_n , \forall n. \label{tdcd:tn3}
  \end{align}
\end{subequations}

The operation constraints of the $r$th distribution network is as follows,
\begin{subequations}
  \label{tdcd:dn}
  \begin{align}
    & \overline{\pi}^{DN}_r \ge \pi^{DN}_r \ge \sum_{n\in\mathcal{N}^{DN}_r} \pi_{r,n}^{DN,G},\label{tdcd:dn_cost1}\\
    & \pi_{r,n}^{DN,G} \ge a^{DN,G}_{r,n,i} P^{DN,G}_{r,n} + b^{DN,G}_{r,n,i}, \label{tdcd:dn_cost2}\\
    & \begin{aligned}
      & P^{DN,G}_{r,n} + \sum_{m \in \mathcal{A}_n} P^{DN,F}_{r,mn}= P^{DN,D}_{r,n}, P^{DN}_{r,0} =  \sum_{m \in \mathcal{A}_0} P^{DN,F}_{r,m0}, \\
      & Q^{DN,G}_{r,n} + \sum_{m \in \mathcal{A}_n} Q^{DN,F}_{r,mn}= Q^{DN,D}_{r,n}, Q^{DN}_{r,0} =  \sum_{m \in \mathcal{A}_0} Q^{DN,F}_{r,m0}, 
    \end{aligned} \label{tdcd:dn_bal}\\
    & V^2_{r,m} - V^2_{r,n} = 2(r_{r,mn} P^{DN,F}_{r,mn} + x_{r,mn} Q^{DN,F}_{r,mn}), \label{tdcd:dn_pf}\\
    & (\cos \frac{2k\pi}{N_s})P^{DN,F}_{r,mn}+(\sin \frac{2k\pi}{N_s})Q^{DN,F}_{r,mn} \leq (\cos \frac{\pi}{N_s})\overline{F}^{DN}_{r,mn},\label{tdcd:dn_cap}\\
    & \underline{V}^2_{r,n} \leq V^2_{r,n} \leq \overline{V}^2_{r,n}, \label{tdcd:dn_vol}\\
    & \underline{P}^{DN,G}_{r,n} \leq P^{DN,G}_{r,n} \leq \overline{P}^{DN,G}_{r,n},\underline{Q}^{DN,G}_{r,n} \leq Q^{DN,G}_{r,n} \leq \overline{Q}^{DN,G}_{r,n}. \label{tdcd:dn_gen}
  \end{align}
\end{subequations}
Equation \eqref{tdcd:dn_cost1} and \eqref{tdcd:dn_cost2} model operation costs of the distribution system and DER $n$ in the epigraph form, where $\overline{\pi}^{DN}_r$ is a large enough constant, $a^{DN,G}_{r,n,i}$ and $b^{DN,G}_{r,n,i}$ are coefficients of the $i$th segment of the piece-wise linear cost function of DER $n$. Equation \eqref{tdcd:dn_bal} enforces nodal power balance, where $P^{DN,F}_{r,mn}$ and $Q^{DN,F}_{r,mn}$ respectively denote active and reactive power flow on the feeder from node $m$ to node $n$, $\mathcal{A}_n$ is the set of nodes adjacent to node $n$. $P^{DN}_{r,0}$ and $Q^{DN}_{r,0}$ are net active and reactive power the distribution network injects into the transmission system. Equation \eqref{tdcd:dn_pf} is the simplified DistFlow model of the distribution network \cite{ref:distflow}, which incorporates the reactive power and can be used to the distribution network with large R/X ratio. Equation \eqref{tdcd:dn_cap} enforces the transmission limit of branch $mn$ with piece-wise inner approximation and $\overline{F}^{DN}_{r,mn}$ is the capacity of the branch. Equation \eqref{tdcd:dn_vol} and \eqref{tdcd:dn_gen} enforce limits on nodal voltage magnitudes and power output of DERs, respectively. Taking $V^2_{r,n}$ as the independent variable, the above constraints are linear.

\subsection{EP-based Solution for TDCOD}
In problem \eqref{tdcd:obj}-\eqref{tdcd:dn}, the optimization of transmission and distribution systems are coupled by $P^{DN}_{r,0}$ and $\pi^{DN}_r$. For the $r$th distribution network, take $x^{DN}_r = (P^{DN}_{r,0}, \pi^{DN}_r)$ as the coordination variable and take $y^{DN}_r = (Q^{DN}_{r,0}, P^{DN,G}_{r}, Q^{DN,G}_{r}, P^{DN,F}_{r}, Q^{DN,F}_{r}, V^2_r)$ as the internal variable. The operation feasible region of the distribution network, denoted as $\Omega^{DN}_r$, is the set of $(x^{DN}_r,y^{DN}_r)$ subject to constraints in \eqref{tdcd:dn}. Then the EP model of the distribution network can be formulated according to the definition in \eqref{md:esr}, denoted as $\Phi^{DN}_r$. The EP model is the projection of $\Omega^{DN}_r$ onto the subspace of $x^{DN}_r$. Note that constraints \eqref{tdcd:dn} are linear and thus, both $\Omega^{DN}_r$ and its projection $\Phi^{DN}_r$ are polytopes, and $\Phi^{DN}_r$ can be calculated by the proposed PVE algorithm. The EP model $\Phi^{DN}_r$ contains all possible combinations of $P^{DN}_{r,0}$ and $\pi^{DN}_r$ that can be executed by the distribution network with at least one internal generation schedule satisfying constraints in \eqref{tdcd:dn}. Using EP models of distribution networks to replace their original model in the TDCOD problem, the operation cost $\pi^{DN}_r$ of the distribution network will be minimized as in \eqref{tdcd:obj}, and the active power exchange $P^{DN}_{r,0}$ determined by the transmission system operator will be ensured to be feasible for the distribution network. Hence, the EP-based coordinated optimization of transmission and distribution systems will be identical to the joint optimization.

The process of the EP-based TDCOD contains three successive steps.
\begin{enumerate}[1)]
  \item Equivalent projection. Each distribution system calculates the EP model $\Phi^{DN}_r$ and reports it to the transmission system operator.
  \item Coordinated optimization. The transmission system operator solves the EP-based TDCOD problem, i.e., minimizing the objective function in \eqref{tdcd:obj} subject to constraint \eqref{tdcd:tn} and $(P^{DN}_{r,0}, \pi^{DN}_r) \in \mathcal{R}^{DN}$. The optimal value $(\hat{P}^{DN}_{r,0}, \hat{\pi}^{DN}_r)$ is then published to each distribution system.
  \item Distribution system operation. Each DSO fixes $\hat{P}^{DN}_{r,0}$ as the boundary condition and optimally dispatches the local system by solving the problem $\min \{\pi^{DN}_r : \text{Eq.} \eqref{tdcd:dn}, P^{DN}_{r,0}=\hat{P}^{DN}_{r,0} \}$.
\end{enumerate}
The above coordination process does not require iterations between the transmission and distribution systems. This coordination scheme is also compatible with the existing transmission system dispatch that schedules the optimal output of resources with their submitted information.

\subsection{Case Study}
\subsubsection{EP of the distribution network}
\begin{figure}[t!]
  \centering
  \includegraphics[width=3.5in]{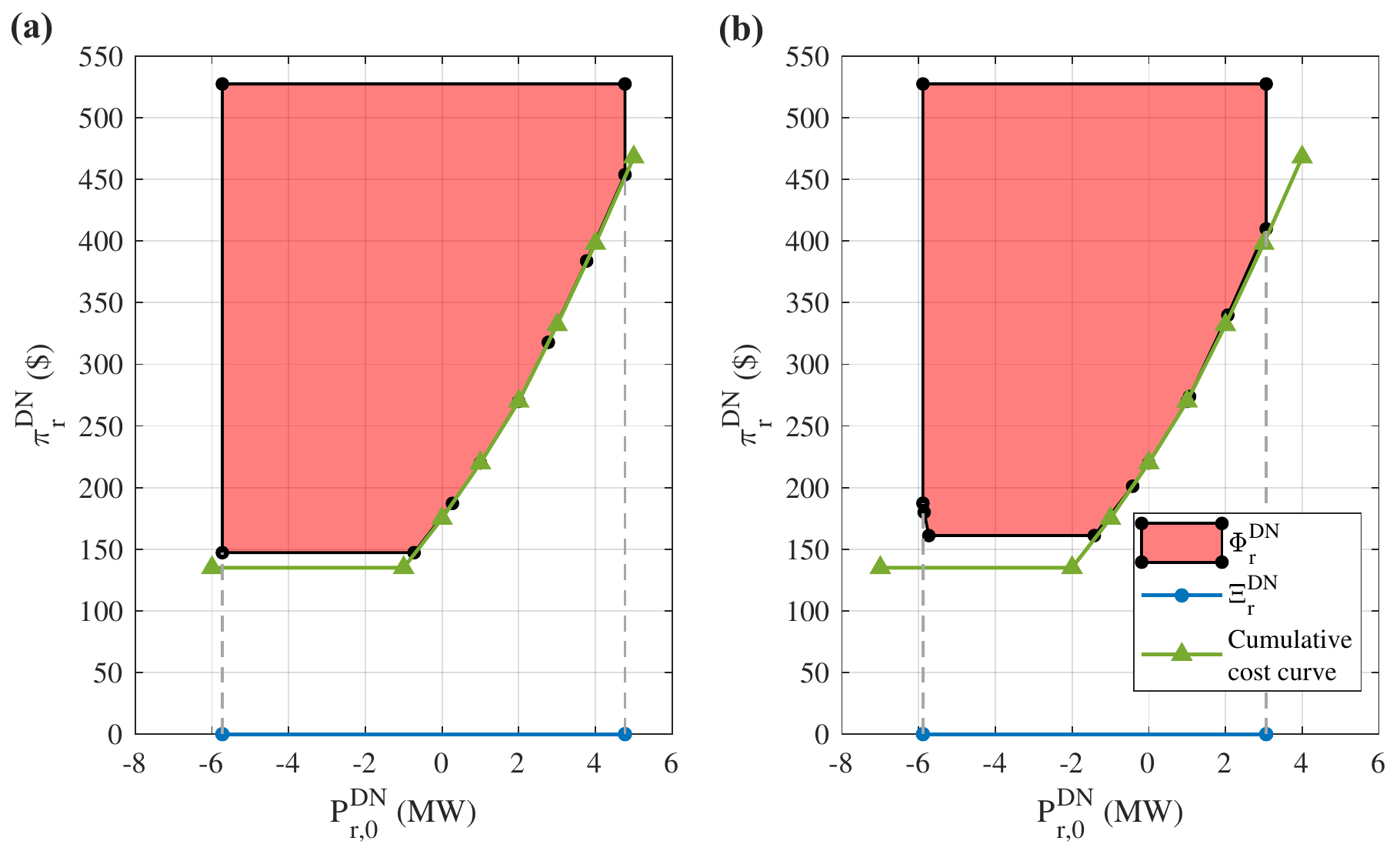}
  \caption{EP models of the IEEE-13 distribution network at (a) valley load and (b) peak load.}
  \label{fig:dist_esr}
\end{figure}
The modified IEEE-13 test feeder with 6 DERs in reference \cite{ref:tan_rcc} is used to demonstrate the EP of the distribution network. Red polygons in Fig. \ref{fig:dist_esr} exhibit EP models of the test system at the valley and peak hours, which are 2-dimensional regions regarding the net power exchange $P^{DN}_{r,0}$ and the operation cost $\pi^{DN}_r$. The projection of the EP model on the subspace of $P^{DN}_{r,0}$ is denoted as $\Xi^{DN}_r$, which characterizes the range of flexibility that the distribution network can provide. The green curves with triangular markers are cumulative cost functions of DERs. As can be seen, the cumulative cost curve is beneath the EP model and has a wider range of net power exchange. This is because network limits are omitted when generating the cumulative cost curve. Hence, only aggregating cost curves of DERs for transmission-level dispatch is not enough, the EP of the distribution system incorporating both cost functions and network constraints is needed for the optimal coordination between transmission and distribution systems.

\subsubsection{EP calculation}
\begin{table}[t!]
  \centering
  \caption{Computational Efficiency of EP Calculation}
  \label{tab:dist_esr_eff}
  \renewcommand\arraystretch{1.3}
  \setlength{\tabcolsep}{2mm}{
  \begin{tabular}{ccccc}
    \toprule
    System &  \makecell{Number\\ of DERs} & \makecell{Time of\\FME (s)} & \makecell{Time of\\PVE (s)} & \makecell{Rate of \\ model reduction}\\
    \midrule
    DN-13 & 6 & 3.75 & 0.34 & 98.4\%\\
    DN-25 & 12 & 41.29 & 0.32  & 99.5\%\\
    DN-49 & 24 & \textgreater1200 & 0.37 & \textgreater99.9\% \\
    DN-241 & 120 & \textgreater1200 & 0.62 & \textgreater99.9\% \\
    DN-2401 & 1200 & \textgreater1200 & 2.37 & \textgreater99.9\%\\
    \bottomrule
    \end{tabular}
  }
\end{table}

Multiple IEEE-13 feeders are connected at the root node to create larger-scale test systems with 25, 49, 241, and 2401 nodes to examine the computation performance of the EP calculation. As shown in TABLE \ref{tab:dist_esr_eff}, the proposed PVE algorithm successfully obtains EP results of all five test systems within 3 seconds. In contrast, the conventional FME method takes more than 10 and 129 times of computational effort to calculate the EP for the 13-node and 25-node test systems, respectively. For test systems with more than 49 nodes, the FME fails to yield the EP results within 1200s. This result shows the superiority of the PVE-based EP calculation in terms of computational efficiency and scalability. As for the reduction of the model scale, the EP model of the distribution network is described by a group of 2-dimensional linear constraints, which reduces the model scale of the distribution network by more than 98.4\%.

\subsubsection{EP-based TDCOD}
\begin{table}[t]
  \centering
  \caption{Computational Efficiency of EP-based TDCOD}
  \label{tab:tdcd_eff}
  \renewcommand\arraystretch{1.3}
  \setlength{\tabcolsep}{2mm}{
  \begin{tabular}{ccccc}
      \toprule
      \multirow{2}{*}{\makecell{Num of\\DNs}} & \multirow{2}{*}{\makecell{Time of joint\\ optimization (s)}} & \multicolumn{3}{c}{Time of EP-based coordination (s)} \\
      \cmidrule{3-5}
       & &  \makecell{System \\ reduction} & \makecell{Coordinated \\ optimization} & Total \\
      \midrule
      $50$ & 0.26 & 0.31 & 0.08 & 0.39\\
      $100$ & 0.47 & 0.30 & 0.09 & 0.39\\
      $200$ & 0.68 & 0.31 & 0.12 & 0.43\\
      $400$ & 0.97 & 0.31 & 0.20 & 0.51\\
      \bottomrule
  \end{tabular}
  }
\end{table}

The IEEE-24 system is connected with different numbers of IEEE-13 distribution feeders to simulate the coordinated dispatch between transmission and distribution systems. The coordinated dispatch is solved by the joint optimization and the EP-based coordination method respectively. Optimization results from the two methods are verified to be identical for all the test cases. The computation time of the joint optimization and the EP-based coordination is summarized in TABLE \ref{tab:tdcd_eff}. The computation time of the EP-based coordination is dominated by the system reduction process. With the EP, the time of the coordinated optimization is reduced by more than 69.2\%, which may help to relieve the computation burden of the transmission system. The total computation time of the EP-based coordination is also reduced compared with the joint optimization in cases with more than 100 distribution networks. This is because the EP distributes the computation burden of solving a large-scale centralized optimization to each subsystem in parallel and thus, is more efficient for the coordinated management of numerous distribution networks. 


\section{Conclusion}
This two-part paper proposes the EP theory to make external equivalence of the subsystem and realizes the COD of power systems in a non-iterative fashion. To calculate the EP efficiently and accurately, Part II of this paper proposes a novel polyhedral projection algorithm termed the PVE, which characterizes the EP model by identifying its vertices and building the convex hull of vertices. The vertex identification process in the PVE algorithm is finely designed to give higher priority to vertices that are critical to the overall shape of the projection, which makes the approximation error decreases along the steepest path. The Hausdorff distance is employed to measure and control the calculation accuracy of the PVE algorithm, which can provide flexibility to balance the computation accuracy and computation effort of the EP calculation in practice. The EP theory and the PVE algorithm are applied to the non-iterative COD of multi-area systems and transmission-distribution systems. Case studies of different scales testify the superiority of the proposed PVE algorithm in terms of computational efficiency and scalability compared with the conventional FME algorithm. The EP is verified to reduce the subsystem model scale by more than 92.7\% for regional transmission systems and more than 98.4\% for distribution networks, which can alleviate the computation and communication burden in coordinated dispatch. The effectiveness of the EP-based non-iterative coordinated dispatch is also validated in comparison with the joint optimization.

The proposed EP theory and the non-iterative coordinated optimization framework are general and may find a broad spectrum of applications in addition to instances in this paper, e.g., the coordinated dispatch of the multi-energy network, the coupling of power and traffic networks, and the integration of user-side flexible resources. Applications of the PVE algorithm to other projection problems, e.g., flexibility region aggregation, loadability set \cite{ref:loadability1}, and the projection-based robust optimization \cite{ref:ro_prj}, are also worthy of future investigation. Incorporating the robust feasibility and chance constraints to deal with the uncertainty and incorporating inter-temporal constraints in the projection calculation should be future extensions of the proposed method.



\ifCLASSOPTIONcaptionsoff
  \newpage
\fi


\bibliographystyle{IEEEtran}
\bibliography{Reference}

\end{document}